\theoremstyle{plain}
\theoremstyle{definition}
\newtheorem{theorem}{Theorem}[subsection]
\newtheorem{thm}{Theorem}[subsubsection]
\newtheorem{lemma}[theorem]{Lemma}
\newtheorem{cor}[theorem]{Corollary}
\newtheorem{proposition}[theorem]{Proposition}
\newtheorem{prop}[theorem]{Proposition}
\theoremstyle{definition}
\newtheorem{assumption}[theorem]{Assumption}
\theoremstyle{remark}
\newtheorem{remark}[theorem]{Remark}
\renewcommand{\AA}{\mathbb{A}}
\newcommand{\QQ}{\mathbb{Q}}
\newcommand{\ZZ}{\mathbb{Z}}
\newcommand{\FF}{\mathbb{F}}
\newcommand{\PP}{\mathbb{P}}
\newcommand{\Ecal}{{\mathcal E}}
\newcommand{\Lcal}{{\mathcal L}}
\newcommand{\Ocal}{{\mathcal O}}
\newcommand{\mfr}{{\mathfrak m}}
\newcommand{\invap}{\inv_v(A(P_v))}
\DeclareMathOperator{\Gal}{Gal}
\DeclareMathOperator{\inv}{inv}
\DeclareMathOperator{\Proj}{Proj}
\DeclareMathOperator{\Sym}{Sym}
\DeclareSymbolFont{cyrletters}{OT2}{wncyr}{m}{n}
\DeclareMathSymbol{\Sha}{\mathalpha}{cyrletters}{"58}
\newcommand{\defi}[1]{\textsf{#1}} 
\newcommand{\BMo}{Brauer-Manin obstruction }
\newcommand{\Br}{\textup{Br}}
\g@addto@macro\bfseries{\boldmath}  
\begin{document}
	
	\begin{title}
		{On genus one curves violating the local-global principle}  
	\end{title}
	\author{Han Wu}
	\address{University of Science and Technology of China,
		School of Mathematical Sciences,
		No.96, JinZhai Road, Baohe District, Hefei,
		Anhui, 230026. P.R.China.}
	\email{wuhan90@mail.ustc.edu.cn}
	\date{}
	\subjclass[2020]{Primary 14H45; Secondary 11G05, 14G12, 14G05.}
	\keywords{rational points, local-global principle., genus one curves, Brauer-Manin obstruction.}




	\begin{abstract} 
		For any number field not containing $\QQ(i),$  
		we give an explicit construction to prove that there exists an elliptic curve defined over this field such that its Shafarevich-Tate group is nontrivial.
	\end{abstract} 
	
	\maketitle

	
	\section{Introduction}
	
	\subsection{Background}
	Let $X$ be a proper algebraic variety defined over a number field $K.$ 
	Let $\Omega_K$ be the set of all nontrivial places of $K.$ 
	Let $K_v$ be the completion of $K$ at $v\in \Omega_K.$
	We say that $X$ violates the \defi{local-global principle} if $X(K_v)\neq\emptyset$ for all $v\in\Omega_K,$ whereas $X(K)=\emptyset.$ 
	
	The Hasse-Minkowski theorem states that an algebraic variety defined by a quadratic form satisfies the local-global principle. As a consequence, the local-global principle holds for every smooth, projective and geometrically connected curve of genus-0.  The first genus-1 curve violating the local-global principle was discovered by Lind \cite{Li40} and Reichardt \cite{Re42}.
	A simple one is Selmer's cubic curve defined over $\QQ$ by
	$3w_0^3+4w_1^3+5w_2^3=0$ in $\PP^2$ with homogeneous coordinates $(w_0:w_1:w_2),$ cf. \cite{Se51}.
	Poonen \cite{Po10a} proved that there exist curves over every global field violating the local-global
	principle. Clark \cite[Section 5 Conjecture 1]{Cl09} conjectured that genus-1 curve will be enough, i.e. for any given global field, there exists a genus-1 curve over this field violating the local-global principle. 
	
	Our goal is to prove that Clark's conjecture \cite[Section 5 Conjecture 1]{Cl09} holds for any number field not containing $\QQ(i).$ More exactly, we will prove the following theorem.
	
	\begin{thm}[Theorem \ref{thm: main theorem}]
		For any number field $K$ not containing $\QQ(i),$ there exists an elliptic curve $E$ defined over $K$ such that $\Sha (K,E)[2]\neq 0.$ Here  $\Sha (K,E)[2]$ is the $2$-torsion subgroup of the Shafarevich-Tate group of $E.$
	\end{thm}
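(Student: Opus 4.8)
The plan is to realize a nonzero class of $\Sha(K,E)[2]$ by an explicit genus-$1$ curve that is everywhere locally soluble but has no $K$-rational point. Recall that a smooth projective genus-$1$ curve $C/K$ with $E=\mathrm{Jac}(C)$ defines a class in $H^1(K,E)$ whose order equals the period of $C$; a Lind--Reichardt type (binary quartic) model $c\,y^{2}=g(x)$ with $\deg g=4$ carries a $K$-rational divisor class of degree $2$ coming from the degree-$2$ coordinate map $x$, so its period divides $2$ and $[C]\in H^1(K,E)[2]$. If moreover $C(K_v)\neq\emptyset$ for every $v\in\Omega_K$ while $C(K)=\emptyset$, then $[C]$ is a nonzero element of $\Sha(K,E)[2]$. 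Thus it suffices to produce such a $C$ over every number field $K$ with $\QQ(i)\not\subset K$.

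First I would fix a model generalizing Reichardt's curve $2y^2=x^4-17$,
\[
C:\quad c\,y^{2}=g(x),\qquad g(x)=x^{4}+\cdots\in K[x],
\]
and compute its Jacobian $E$ together with the discriminant and the bad primes of the model. The parameter $c$ and the coefficients of $g$ will be chosen in $K$ by an approximation (CRT) argument: I would fix a finite set $S$ of places (the archimedean places, the places above $2$, and a small auxiliary set) and prescribe the residues of the parameters at the primes of $S$, so that at every $v\in S$ the model visibly acquires a smooth local point. At all but finitely many primes $C$ has good reduction, and the Weil bound together with Hensel's lemma yields a smooth $\Ocal_v$-point automatically; at the archimedean places local solubility is arranged by a sign/positivity condition on $c$ and $g$, and at the finitely many bad primes (in particular those above $2$) it is forced by the congruences imposed above. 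This settles $C(K_v)\neq\emptyset$ for all $v$.

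The remaining, and main, task is to show $C(K)=\emptyset$. For this I would run a $2$-descent argument, equivalently exhibit a \BMo: a hypothetical rational point produces a factorization of the values of $g$ whose local Hilbert symbols must satisfy the product formula $\prod_{v}(\,\cdot\,,\,\cdot\,)_v=1$, and the parameters are chosen precisely so that exactly one factor is forced to equal $-1$ while every other factor is $+1$, contradicting reciprocity. The hypothesis $\QQ(i)\not\subset K$ is exactly what guarantees that the controlling quadratic character, attached to the nontrivial extension $K(i)/K$ (i.e.\ to the nonsquare $-1$), does not degenerate and supplies the obstructing place.

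The hard part will be making this global step uniform across all admissible $K$: unlike the case $K=\QQ$, I cannot rely on a fixed prime such as $17$ and classical quadratic reciprocity, so I must locate, inside an arbitrary number field without $i$, a prime (or product of primes) with prescribed splitting behaviour in $K(i)$ whose associated symbol breaks the product formula, while simultaneously respecting the everywhere-local conditions of the previous paragraph and the constraints imposed by the class group and unit group of $K$. Reconciling everywhere-local solubility with a single surviving reciprocity defect, uniformly in $K$, is the crux of the argument.
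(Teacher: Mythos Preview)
Your plan matches the paper's: the curve is indeed the binary quartic $q\,y^{2}=x^{4}-p$, local solubility is obtained from Weil bounds/Hensel plus congruence conditions on $p$ and $q$, and $C(K)=\emptyset$ is deduced from the Brauer--Manin obstruction given by the quaternion class $A=(y,p)\in\Br(C)[2]$, with the existence of suitable $p,q$ secured by \v{C}ebotarev.

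One point to correct before you try to fill in details: the hypothesis $\QQ(i)\not\subset K$ does \emph{not} enter by making $-1$ the ``controlling character'' or by supplying the obstructing place. In the paper the unique place where $\inv_v(A)$ is nonzero is $v_p$, the prime dividing $p$, and the relevant symbol there is $(y,p)_{v_p}$; the class $(y,-1)$ plays no role. The assumption $i\notin K$ is used only to guarantee, via \v{C}ebotarev applied to $K(i)/K$ inside the Galois closure, the existence of an \emph{auxiliary} odd place $v_0$ with $\#\FF_{v_0}\equiv 3\pmod 4$. One then takes $q$ to be a uniformizer at $v_0$ (times units supported on $S_2$), and the point of the congruence $\#\FF_{v_0}\equiv 3\pmod 4$ is purely that $\FF_{v_0}^{\times 2}=\FF_{v_0}^{\times 4}$: once the product formula forces $p\in K_{v_0}^{\times 2}$, it is automatically a fourth power there, so $C$ has a local point at $v_0$ and $\inv_{v_0}(A)=0$. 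Finally $p$ itself is chosen (again by \v{C}ebotarev, now in $K_\mfr(\sqrt[4]{q})/K$) so that it is totally split in a ray class field $K_\mfr$---hence $p\equiv 1$ at every place of $S_2$, giving $p\in K_v^{\times 4}$ there---while $q\in K_{v_p}^{\times 2}\setminus K_{v_p}^{\times 4}$, which is exactly what forces $\inv_{v_p}(A)=1/2$. If you chase the obstruction through $K(i)/K$ instead, you will not find it.
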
 
	
	The way to prove this theorem is to give an explicit construction of genus-1 curve violating the local-global principle. The curves that we consider in this paper, are 
	the smooth projective models of plane curves defined by the equation 
		\begin{equation}\label{equation}
		qy^2=x^4-p
	\end{equation}
    in $K[x,y]$ such that $p,~q$ are coprime integers in $K.$ These are genus-1 curves. We give smooth compactifications of these curves in Section \ref{Compactification}. We discuss in which case these curves have local points in Section \ref{section: existence of local points}, and give sufficient conditions about whether they don't exist $K$-rational points in Section \ref{section: no rational points}. In Section \ref{section: check the assumption}, we will prove that if the number field $K$ does not contain $\QQ(i),$ then we can choose parameters satisfying all conditions of Subsection \ref{subsection: assumption on a number field} so that the chosen curve violates the local-global principle. Then the Jacobian of the chosen curve will meet the needs of Theorem \ref{thm: main theorem}.
    
    Although we believe that for any number field, the curve given by the equation (\ref{equation}) can positively answer Clark's conjecture \cite[Section 5 Conjecture 1]{Cl09} completely, it seems unreachable now. Our paper in progress will answer Clark's conjecture \cite[Section 5 Conjecture 1]{Cl09} completely, but the proof is not given by an explicit construction.
    
	\section{Notation}
Given a number field $K,$ let $\Ocal_K$ be the ring of its integers, and let $\Omega_K$ be the set of all its nontrivial places. Let $\infty_K\subset \Omega_K$ be the subset of all archimedean places, and let $2_K\subset \Omega_K$ be the subset of all $2$-adic places. Let $\infty_K^r\subset \infty_K$ be the subset of all real places, and let $\Omega_K^f=\Omega_K\backslash \infty_K$ be the set of all finite places of $K.$ Let $K_v$ be the completion of $K$ at  $v\in \Omega_K.$ 
 For $v\in \Omega_K^f,$  let $\FF_v$ be its residue field. 
We say that an element is a \defi{prime element}, if the ideal generated by this element is a prime ideal.

	\section{Compactification}\label{Compactification}
	In this section, we construct the smooth projective models of plane curves defined by the equation (\ref{equation}).
	
\subsection{Zero loci in projective space bundles}\label{subsection: compactification}
Let $B$ be a smooth, projective, and geometrically connected variety over a number field $K.$ Let $\Lcal$ be a line bundle on $B,$ and we assume the set of global sections $\Gamma(B,\Lcal^{\otimes2})\neq 0.$ Let $\Ecal=\Ocal_B\oplus \Lcal$ be the rank-$2$ vector bundle on $B.$ Let $a\in K^\times$ be a constant, and let $s\in \Gamma(B,\Lcal^{\otimes2})$ be a nonzero global section. The zero locus of $(a,-s)\in\Gamma(B,\Ocal_B\oplus\Lcal^{\otimes2})\subset\Gamma(B,\Sym^2\Ecal)$ in the projective space bundle $\Proj(\Ecal)$ is  a projective scheme, denoted by $X$ with the natural projection $\alpha\colon X\to B.$ We have the following lemma to ensure the smoothness of $X.$

\begin{lemma}\label{lemma smoothness criterion}
	Given a number field $K,$ we use the notation as in Subsection \ref{subsection: compactification}. The locus defined by $s=0$ in $B$ is smooth, if and only if $X$ is smooth.
\end{lemma}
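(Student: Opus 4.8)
The plan is to reduce the statement to the Jacobian criterion applied to a relative double cover, using that smoothness is local on the base. First I would decompose the symmetric square: since $\Ecal=\Ocal_B\oplus\Lcal$ has rank two, one has $\Sym^2\Ecal\cong \Ocal_B\oplus\Lcal\oplus\Lcal^{\otimes2}$, where the three summands are $\Ocal_B^{\otimes2}$, the cross term $\Ocal_B\otimes\Lcal$, and $\Lcal^{\otimes2}$. Under this splitting the chosen section $(a,-s)$ has vanishing $\Lcal$-component, nonzero constant $\Ocal_B$-component $a\in K^\times$, and $\Lcal^{\otimes2}$-component $-s$; this is precisely the subspace $\Gamma(B,\Ocal_B\oplus\Lcal^{\otimes2})$ singled out in Subsection \ref{subsection: compactification}.

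Next I would work over a Zariski open $U\subseteq B$ on which $\Lcal$ is trivial, with local frame $e_0,e_1$ of $\Ecal$ given by the generator of $\Ocal_B$ and a local generator of $\Lcal$. Writing $x_0,x_1$ for the associated fiberwise homogeneous coordinates on $\Proj(\Ecal)|_U\cong\PP^1_U$ and letting $\tilde s\in\Ocal(U)$ represent $s$ in this trivialization, the sections $e_0^2,e_1^2$ of $\Sym^2\Ecal$ restrict to $x_0^2,x_1^2$, so $X|_U$ is cut out by the relative quadric $a x_0^2-\tilde s\,x_1^2=0$. The crucial preliminary observation is that $X$ meets the section $\{x_1=0\}$ nowhere: there the equation forces $ax_0^2=0$, impossible since $a\neq0$ while $x_0,x_1$ cannot both vanish. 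Hence $X|_U$ lies entirely in the affine chart $\{x_1\neq0\}$, and in the coordinate $z=x_0/x_1$ it is the affine double cover $\{az^2=\tilde s\}\subseteq\AA^1_U$.

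Then I would apply the Jacobian criterion to $F=az^2-\tilde s$ on the smooth ambient space $\AA^1_U$, using that $\Char K=0$ so that $2a\neq0$. In local coordinates $t_1,\dots,t_n$ on $U$ the partial derivatives are $\partial_zF=2az$ and $\partial_{t_i}F=-\partial_{t_i}\tilde s$. A point of $X|_U$ is therefore singular exactly when $z=0$, $\tilde s=0$, and all $\partial_{t_i}\tilde s=0$; via $z=0$ these are in bijection with the points of $\{\tilde s=0\}\subseteq U$ at which $d\tilde s$ vanishes, which are precisely the points where the divisor $\{s=0\}$ fails to be smooth. Since this description is manifestly independent of the trivialization and compatible on overlaps, the singular locus of $X$ maps isomorphically onto the singular locus of $\{s=0\}$, giving the desired equivalence.

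I expect the main obstacle to be bookkeeping rather than anything conceptual: pinning down the convention for $\Proj(\Ecal)$ and the inclusion $\Gamma(B,\Ocal_B\oplus\Lcal^{\otimes2})\subseteq\Gamma(B,\Sym^2\Ecal)$ so that the coefficient of $x_0^2$ is genuinely the nowhere-vanishing constant $a$ (this is exactly what makes the ``no points on $\{x_1=0\}$'' step valid), and verifying that the local double-cover description glues independently of the chosen local generator of $\Lcal$. Once these identifications are fixed, the conclusion is a direct application of the smoothness criterion for hypersurfaces in a smooth variety.
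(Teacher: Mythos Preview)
Your proposal is correct and follows exactly the approach the paper indicates: the paper's proof reads in its entirety ``By the Jacobian criterion and local computation, this lemma follows,'' and your argument is a careful unpacking of precisely that---trivialize $\Lcal$ locally, reduce to the affine double cover $az^2=\tilde s$, and apply the Jacobian criterion. There is no meaningful difference in strategy; you have simply written out the details the paper leaves implicit.
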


\begin{proof}
	By the Jacobian criterion and local computation, this lemma follows.
\end{proof}

\begin{remark}\label{remark: smooth compactification}
	Consider the case that $B=\PP^1$ and $\Lcal=\Ocal(n_0)$ for some fixed positive integer $n_0.$ Let  $s\in \Gamma(\PP^1,\Ocal(2n_0))$ be a global section, and let $s(x)\in K[x]$ be a dehomogenization of $s.$ By Lemma \ref{lemma smoothness criterion}, in order to ensure the smoothness of $X,$ we assume that the polynomial $s(x)$ is separable of degree $2n_0.$ Then $X$ is a smooth, projective, and geometrically connected curve, and it is the smooth compactification of the affine curve given by the equation $ay^2=s(x)\in \AA^2$ with affine coordinates $(x,y).$ 
\end{remark}

Next, we consider the plane curve defined by the equation (\ref{equation}). By Remark \ref{remark: smooth compactification}, we choose the smooth projective model as in Subsection \ref{subsection: compactification}.

\begin{cor}\label{cor: genus =1}
	Given a number field $K,$ let $C$ be 
	the smooth projective model of the plane curve defined by the equation (\ref{equation}).
	 Then the genus of $C$ is one. 
\end{cor}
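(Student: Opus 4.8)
The plan is to identify $C$ with the smooth model $X$ produced by the construction of Subsection \ref{subsection: compactification}, in the special case treated in Remark \ref{remark: smooth compactification}, and then to compute the genus by applying the Riemann--Hurwitz formula to the degree-two projection $\alpha\colon C\to\PP^1$.

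First I would match the data: take $B=\PP^1$, $\Lcal=\Ocal(n_0)$ with $n_0=2$, $a=q\in K^\times$, and $s(x)=x^4-p$, which is a dehomogenization of a global section of $\Lcal^{\otimes 2}=\Ocal(2n_0)=\Ocal(4)$. Since $p,q$ are coprime (in particular nonzero) integers of $K$ and $\Char K=0$, the polynomial $x^4-p$ is separable of degree $4=2n_0$; hence Remark \ref{remark: smooth compactification}, via Lemma \ref{lemma smoothness criterion}, applies and guarantees that $C$ is a smooth, projective, and geometrically connected curve equipped with the double cover $\alpha\colon C\to\PP^1$.

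Because the genus is a geometric invariant, I would base change to $\bar{K}$ and argue there. The map $\alpha$ has degree $2$, so its ramification divisor consists exactly of the points lying over the zeros of $s$ together with any ramification over $\infty\in\PP^1$. Over $\bar{K}$ the polynomial $x^4-p$ has four distinct roots, contributing four simple ramification points in the affine part, while the even degree $2n_0$ of $s$ forces the two points of $C$ above $\infty$ to be unramified. Granting this, the ramification divisor has degree $4$, and Riemann--Hurwitz gives
$$2g(C)-2=2(2\cdot 0-2)+4=-4+4=0,$$
whence $g(C)=1$.

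The only real obstacle is the analysis at infinity: one must confirm that the compactification inside $\Proj(\Ecal)$ adds two smooth unramified points over $\infty$ rather than a single branch point, which is precisely the phenomenon distinguishing the even-degree case $\deg s=2n_0$ from the odd-degree case. I would settle it by a local computation in the chart of $\Proj(\Ecal)$ over a neighbourhood of $\infty$, which is routine once the section $(a,-s)$ is written out there. Equivalently, one may simply invoke the standard hyperelliptic genus formula $g=\lfloor(\deg s-1)/2\rfloor$, which for $\deg s=4$ yields $g(C)=1$ directly.
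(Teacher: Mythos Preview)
Your argument is correct and follows essentially the same route as the paper: identify $C$ with the smooth compactification of Subsection~\ref{subsection: compactification} via Remark~\ref{remark: smooth compactification} and Lemma~\ref{lemma smoothness criterion}, then apply Riemann--Hurwitz to the double cover $\alpha\colon C\to\PP^1$. The paper's proof simply cites Hurwitz's theorem without spelling out the ramification count at infinity, so your local analysis over $\infty$ (and the alternative appeal to the hyperelliptic genus formula) just makes explicit what the paper leaves to the reader.
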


\begin{proof}
	Since the polynomial $x^4-p$ is separable over $K,$ by Lemma \ref{lemma smoothness criterion} and Remark \ref{remark: smooth compactification}, we consider its smooth compactification as in \ref{subsection: compactification}, and let $\alpha\colon C\to \PP^1$ be the natural projection. By Hurwitz's Theorem \cite[Chapter IV. Corollary 2.4]{Ha97}, the genus of $C$ is one.
\end{proof}
\section{Existence of local points}\label{section: existence of local points}

	In this section, we discuss in which case the curve defined by the equation (\ref{equation}) has local points. By Lang-Weil estimate \cite{LW54}, a given curve over a number field $K$ has local points for almost all\footnote{The phrase "almost all" means "all but finite".} places of $K.$ 
	 The following proposition tells all possible places where it doesn't have local points. It is essential to judge whether the curve has local points.

   	\begin{proposition}\label{prop: exist local points}	Given a number field $K,$ let $C^0$ be the plane curve defined by the equation $qy^2=x^4-p$ in $K[x,y]$ such that $p,~q$ are coprime integers in $K.$ Let $S= \infty_K^r \cup 2_K\cup \{v\in \Omega_K^f | v(pq)\neq 0\}$ be a finite set.   		
   		 Then $C^0(K_v)\neq \emptyset$ for all $v\in \Omega_K\backslash S.$
\end{proposition}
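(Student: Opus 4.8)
The plan is to split $\Omega_K\backslash S$ into its archimedean places and its finite places and treat them separately. Any archimedean $v\in\Omega_K\backslash S$ is necessarily complex, since $\infty_K^r\subset S$, so $K_v=\CC$ is algebraically closed; taking $x=0$ reduces the equation to $qy^2=-p$, which has a solution $y\in\CC$ because $q\neq0$. Thus $(0,y)\in C^0(K_v)$ and this case is immediate. For the remaining work I would concentrate on a finite place $v\in\Omega_K\backslash S$, for which $v\nmid 2$ and $v(p)=v(q)=0$, so the reductions $\bar p,\bar q\in\FF_v^\times$.

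The first substantive step is to show that the smooth projective model $C$ of $C^0$ (constructed as in Remark \ref{remark: smooth compactification} and Corollary \ref{cor: genus =1}) has good reduction at $v$. Since $\Char\FF_v\neq2$ and $\bar p\neq0$, the polynomial $x^4-\bar p$ is separable over $\FF_v$ (its derivative $4x^3$ shares no root with it), so by Lemma \ref{lemma smoothness criterion} the reduction $C_v$ is a smooth, projective, geometrically connected genus-$1$ curve over $\FF_v$. The second step is to produce an $\FF_v$-rational point: by the Hasse--Weil bound, $\#C_v(\FF_v)\geq q_v+1-2\sqrt{q_v}=(\sqrt{q_v}-1)^2>0$ where $q_v=\#\FF_v\geq2$, and since the count is an integer this forces $C_v(\FF_v)\neq\emptyset$. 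This argument is uniform over all residue fields, including the small ones where a direct affine point-count via Weil's character-sum estimate for $\sum_x\chi(x^4-\bar p)$ would be inconclusive. Because $C_v$ is smooth and $C$ has good reduction, Hensel's lemma lifts this point to a $K_v$-rational point, giving $C(K_v)\neq\emptyset$.

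The main obstacle, and the last step, is to pass from the projective model $C$ back to the affine curve $C^0$: a priori the only $K_v$-rational points on $C$ could lie on the locus at infinity $C\backslash C^0$. To rule this out I would use that, having a $K_v$-point, $C$ is an elliptic curve over the local field $K_v$ of characteristic zero, so $C(K_v)$ is infinite, as it contains a finite-index subgroup isomorphic as a topological group to the additive group $\Ocal_{K_v}$ via the formal group. The complement $C\backslash C^0$ is the fiber $\alpha^{-1}(\infty)$, a zero-dimensional scheme of at most two geometric points, hence with at most two $K_v$-points. Therefore $C^0(K_v)=C(K_v)\backslash(C\backslash C^0)(K_v)$ is infinite, and in particular nonempty, which completes the finite-place case and the proof.
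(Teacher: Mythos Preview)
Your proof is correct and shares the paper's overall strategy---good reduction at $v$, an $\FF_v$-point on the reduction, Hensel lifting, and then passage from the projective model back to the affine curve---but the point-count step is genuinely different. You apply the Hasse--Weil bound directly to the reduction $C_v$, a smooth projective geometrically connected genus-$1$ curve; this is legitimate since the Weil bound holds for any such curve, not only for elliptic curves with a marked point. The paper, by contrast, cites only Hasse's theorem for elliptic curves and therefore works around the fact that $\tilde C$ has no obvious rational point: it splits into the case $q\in K_v^{\times 2}$ (handled by an explicit point with $v(x)<0$) and the case $q\notin K_v^{\times 2}$, where it applies Hasse's bound to the auxiliary elliptic curve $\tilde C'\colon y^2=x^4-p$ (which does have a rational point at infinity), and then uses a counting argument on the double cover $\tilde C'\to\PP^1$ to find some $\overline{x_0}$ with $\overline{x_0}^4-\bar p$ a non-square, whence $\bar q^{-1}(\overline{x_0}^4-\bar p)$ is a square. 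Your route is shorter and avoids the case split; the paper's route is more elementary in the sense that it only needs the classical Hasse bound. For the final step from $C(K_v)$ to $C^0(K_v)$ the two arguments coincide in substance: the paper says ``implicit function theorem'' where you invoke the formal group, but either way the point is that $C(K_v)$ is infinite while $C\setminus C^0$ has at most two points.
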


    \begin{proof}
    		Let $C$ be the smooth compactification of $C^0.$ By the implicit function theorem, we only need to check that $C(K_v)\neq \emptyset.$
    		 Suppose that  $v\in \Omega_K\backslash S,$ then $v$ is an odd place. If $q\in K_v^{\times 2},$ then the curve $C^0$ admits a $K_v$-point with $v(x)<0.$ Next, we consider the case $q\notin K_v^2.$ The curve $C$ has good reduction over $K_v.$ Let $\tilde{C}$ be its reduction. Since any point in $\tilde{C}(\FF_v)$ can be lifted to a point in $C(K_v),$ we only need to prove that $\tilde{C}(\FF_v)\neq \emptyset.$ Let $C'$ be the smooth compactification of the curve defined by 
    		 $y^2=x^4-p$ in $K[x,y],$ and $\tilde{C}'$ be its reduction. By Remark \ref{remark: smooth compactification} and Corollary \ref{cor: genus =1}, the curve  $\tilde{C}'$ is an elliptic curve over $\FF_v.$ By Hasse's bound for elliptic over finite field \cite[Chapter V, Theorem 1.1]{Si09}, we have $\sharp \tilde{C}'(\FF_v)\leq 1+\sharp\FF_v+2\sqrt{\sharp\FF_v}.$ So, we get $\sharp \tilde{C}'(\FF_v)<2(1+\sharp\FF_v).$ If the polynomial $x^4-p$ has a $\FF_v$-root, then $\tilde{C}(\FF_v)\neq \emptyset.$ Otherwise, we consider the map $\alpha\colon \tilde{C}'\to \PP^1$ given by $(x,y)\mapsto x,$ the same as in Subsection \ref{subsection: compactification}. Then $\sharp \alpha(\tilde{C}'(\FF_v))=\sharp\tilde{C}'(\FF_v)/2<1+\sharp\FF_v.$ So $\PP^1(\FF_v)\backslash \alpha(\tilde{C}'(\FF_v))\neq \emptyset.$ In this case, there exists an element $\overline{x_0}\in \FF_v$ such that $\overline{x_0}^4-p\notin\FF_v^2.$ For $q\notin K_v^2,$ the equation $qy^2=x^4-p$ with affine coordinates $(x,y)$ has a $\FF_v$-solution with $x=\overline{x_0}.$ So in each case, we have
    		 $\tilde{C}(\FF_v)\neq \emptyset.$ So $C^0(K_v)\neq \emptyset.$
    	    \end{proof}
        	 
    We put some additional conditions for the parameters $p,~q$ so that the curve in Proposition \ref{prop: exist local points} has local points for all places.

   \begin{cor}\label{Cor: exist local points}
   	Given a number field $K,$ let $C^0$ be the plane curve defined by the equation $qy^2=x^4-p$ in $K[x,y]$ such that $p,~q$ are coprime integers in $K.$ Let $S_1=\infty_K^r \cup 2_K\cup \{v\in \Omega_K^f | v(q)\neq 0\}.$ Additionally, we assume that $p$ is a prime element, and $p\in K_v^{\times 4}$ for all $v\in S_1.$ Then $C^0(K_v)\neq \emptyset$ for all $v\in \Omega_K.$
   \end{cor}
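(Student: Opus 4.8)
The plan is to split $\Omega_K$ into three families of places and dispose of each in turn, reserving a reciprocity argument for the single delicate place. Since $p$ is a prime element, $(p)=\pfr$ is a prime ideal and there is a unique place $v_0$ with $v_0(p)\neq 0$; as $\pfr$ is generated by $p$ we have $v_0(p)=1$, and coprimality of $p,q$ gives $v_0(q)=0$. I first note that $v_0$ must be odd: were $v_0$ $2$-adic it would lie in $S_1$, and then the hypothesis $p\in K_{v_0}^{\times 4}$ would force $4\mid v_0(p)=1$, which is absurd. Because $(p)$ is prime one has $\{v:v(pq)\neq 0\}=\{v_0\}\cup\{v:v(q)\neq 0\}$, so the set $S$ of Proposition \ref{prop: exist local points} decomposes as $S=S_1\sqcup\{v_0\}$, and hence $\Omega_K=(\Omega_K\setminus S)\sqcup S_1\sqcup\{v_0\}$. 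It suffices to exhibit local points over each of the three pieces.

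Over $\Omega_K\setminus S$ this is precisely Proposition \ref{prop: exist local points}. Over $v\in S_1$ the hypothesis supplies $t\in K_v^\times$ with $p=t^4$, so $(x,y)=(t,0)$ is a $K_v$-point of $C^0$ since $t^4-p=0$. The whole statement therefore collapses to producing a $K_{v_0}$-point, and here the key reduction is the claim that $q$ is a square in $K_{v_0}$: if $q\in K_{v_0}^{\times 2}$ then $C^0$ has a $K_{v_0}$-point with $v_0(x)<0$, by the same argument used in the first case of the proof of Proposition \ref{prop: exist local points}. So everything comes down to showing $q\in K_{v_0}^{\times 2}$.

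To establish this I intend to apply Hilbert reciprocity for the quadratic Hilbert symbol, $\prod_{v\in\Omega_K}(q,p)_v=1$, and to check that every factor away from $v_0$ is trivial. For $v\in S_1$ the element $p$ is a fourth power, hence a square in $K_v^\times$, so $(q,p)_v=1$ regardless of $q$; at each complex place the symbol is automatically trivial; and at each finite odd place outside $S$ both $p$ and $q$ are units, so $(q,p)_v=1$ there as well. These exhaust all places other than $v_0$, whence $(q,p)_{v_0}=1$. Finally, since $v_0$ is odd with $v_0(q)=0$ and $v_0(p)=1$, this local symbol equals the quadratic residue symbol $\left(\tfrac{q}{\pfr}\right)$, and its triviality is exactly the assertion that $q$ is a square in $K_{v_0}$. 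Combined with the reduction above, this finishes the proof.

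I expect the reciprocity step to be the crux, and its necessity is easy to overlook: a priori $q$ could be a non-square at $v_0$, and in that case one checks that neither the affine curve nor its two points at infinity acquires a $K_{v_0}$-point, so the conclusion would genuinely fail. The role of the hypothesis that $p$ is a fourth power throughout $S_1$ (in particular a square, and positive at the real places) is exactly to annihilate all the local Hilbert symbols outside $v_0$, thereby forcing $q$ to be a square at $v_0$ through the product formula. The computations that require care are the vanishing of each local symbol and the identification of $(q,p)_{v_0}$ with the residue symbol $\left(\tfrac{q}{\pfr}\right)$.
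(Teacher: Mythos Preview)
Your proof is correct and follows essentially the same approach as the paper: reduce to $S_1\cup\{v_p\}$ via Proposition~\ref{prop: exist local points}, handle $S_1$ by the fourth-power hypothesis giving the point $(t,0)$, and at the prime $v_p$ use the Hilbert product formula $\prod_v(p,q)_v=1$ to force $q\in K_{v_p}^{\times2}$, whence a point with $v_p(x)<0$. Your write-up is more explicit than the paper's (you justify why $v_p$ is odd and spell out the vanishing of each local symbol), but the argument is the same.
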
		 
    		   \begin{proof}
    		   	By Proposition \ref{prop: exist local points}, we only need to check the case that $v\in S_1\cup\{v_{p}\}.$
    		   	
    		   	Suppose that $v\in  S_1.$ By assumption that $p\in K_v^{\times 4},$ then the equation $qy^2=x^4-p$ with affine coordinates $(x,y)$ has a $K_v$-solution with $y=0.$\\
    		   	Suppose that $v=v_p,$ then $v$ is an odd place. By the product formula $\prod_{v\in \Omega_K}(p,q)_v= 1,$ we have $q\in K_v^{\times 2}.$ By Hensel's lemma, the curve $C^0$ admits a $K_v$-point with $v(x)<0.$
    		   	
    		   	So $C^0(K_v)\neq \emptyset$ for all $v\in \Omega_K.$
    		\end{proof}

    \section{The local-global principle for genus one curves}\label{section: no rational points}
    \subsection{The \BMo}
        	Let $C^0$ be a curve defined over a number field $K$ as in Proposition \ref{prop: exist local points}, and let $C$ be its smooth compactification. Let $K(C^0)$ be the function field of $C^0.$ According to \cite[Cor. 1.8]{Gr68II}, we have an inclusion $\Br(C)\hookrightarrow\Br(K(C^0)).$     	
    Next, we consider the quaternion algebra class $(y,p)\in \Br(K(C^0))[2].$ It is proved in \cite[Example 6.3.3]{CTS21} that this quaternion algebra class indeed belongs to $\Br(C)[2].$ They proved it in a purely algebraic way. To fit into the context of this paper, we will give another proof by explicit calculations of local invariants. 
    
    \begin{lemma}\label{lemma: Brauer group extension}
    	Given a number field $K,$ let $C^0$ be the plane curve defined by the equation $qy^2=x^4-p$ in $K[x,y]$ such that $p,~q$ are coprime integers in $K.$ Then the quaternion algebra class $A=(y,p)\in \Br(K(C^0))[2]$ belongs to $\Br(C)[2].$ 
    \end{lemma}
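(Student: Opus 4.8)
The plan is to exploit the Faddeev--Witt residue sequence. Since $C$ is a smooth projective curve over a field of characteristic zero, Grothendieck's purity gives an exact sequence
$$0 \to \Br(C) \to \Br(K(C^0)) \xrightarrow{\ \oplus_P \partial_P\ } \bigoplus_{P} H^1(\kappa(P), \QQ/\ZZ),$$
the sum running over the closed points $P$ of $C$, where $\kappa(P)$ is the residue field at $P$ and $\partial_P$ is the residue map. Thus it suffices to show that every residue $\partial_P(A)$ vanishes. For a quaternion class the residue is computed by the tame symbol: writing $v_P$ for the normalized valuation at $P$, one has $\partial_P((y,p)) = (-1)^{v_P(y)v_P(p)}\,\overline{y^{v_P(p)}p^{-v_P(y)}} \in \kappa(P)^\times/\kappa(P)^{\times 2}$. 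Because $p\in K^\times$ is a constant, it has neither zeros nor poles on $C$, so $v_P(p)=0$ for every $P$ and the formula collapses to $\partial_P((y,p)) = \overline{p}^{\,-v_P(y)}$. Hence the residue is trivial at $P$ exactly when $v_P(y)$ is even or $\overline{p}$ is a square in $\kappa(P)$.

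It then remains to locate the zeros and poles of the rational function $y$, since away from these $v_P(y)=0$ and the residue is automatically trivial. First I would treat the zeros, which occur at the points lying over the roots of $x^4-p$, that is, where $y=0$. At such a point $P_0$ with $x(P_0)=x_0$ satisfying $x_0^4=p$, the projection $\alpha\colon C\to\PP^1$ is ramified, so $y$ is a local uniformizer and $v_{P_0}(y)=1$ is odd; however $\overline{p}=\overline{x_0}^{\,4}=(\overline{x_0}^{\,2})^2$ is a square in $\kappa(P_0)$, and the residue vanishes. For the poles, I would pass to the chart at infinity by setting $t=1/x$ and $w=yt^2$, in which the equation becomes $qw^2=1-pt^4$; at $t=0$ this reads $qw^2=1$, so $w$ is a unit and $t$ a uniformizer at each point over $x=\infty$. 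Therefore $y=w\,t^{-2}$ has $v_P(y)=-2$, which is even, and the residue vanishes there as well.

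Since $v_P(y)=0$ outside these two families of points, while at the zeros $\overline{p}$ is a square and at the poles $v_P(y)$ is even, all residues $\partial_P(A)$ vanish, so by the purity sequence $A=(y,p)$ lies in $\Br(C)[2]$. The only delicate part is the bookkeeping in the smooth compactification of Subsection \ref{subsection: compactification}: one must confirm the claimed valuations $v_P(y)=1$ at the ramification points and $v_P(y)=-2$ at infinity in the explicit model, noting that whether $q$ or $p$ become squares in the relevant residue fields affects only how many closed points lie over $x_0$ or over $\infty$, not the valuations themselves. This local verification in the charts of Section \ref{Compactification} is where I expect the actual work to lie, though it is a routine computation.
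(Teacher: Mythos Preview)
Your argument is correct. The residue computation via the tame symbol is clean: since $p$ is a nonzero constant, only the parity of $v_P(y)$ and the square class of $\overline p$ in $\kappa(P)$ matter, and your analysis of the divisor of $y$ (simple zeros over the roots of $x^4-p$, a double pole over $x=\infty$) is accurate. The only point worth a remark is that at the zeros you do not even need $v_{P_0}(y)=1$ precisely; the observation that $\overline p=\overline{x_0}^{\,4}$ is a square in $\kappa(P_0)$ already kills the residue regardless of parity.

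Your route is genuinely different from the paper's. The paper invokes Harari's formal lemma: it shows that for all $v$ outside the finite set $S=\infty_K^r\cup 2_K\cup\{v:v(pq)\neq 0\}$ the local invariant $\inv_v(A(P_v))$ vanishes for every $P_v\in C^0(K_v)$, and Harari's result then forces $A$ to lie in $\Br(C)$. This is an arithmetic argument, trading the geometric residue calculation for a uniform bound on local Hilbert symbols. In fact the paper explicitly flags that a purely algebraic proof (essentially the one you give) appears in \cite[Example 6.3.3]{CTS21}, and opts for the local-invariant approach because those same computations are reused verbatim in Lemma~\ref{lemma: violating local-global principle} to evaluate the Brauer--Manin pairing. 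Your proof is more self-contained and works over any base field of characteristic $\neq 2$, but the paper's choice has the virtue of doing double duty for the obstruction calculation that follows.
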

    
    \begin{proof}
    	By Harari's formal lemma \cite[Th\'eor\`eme 2.1.1]{Ha94}, it will be sufficient to prove that for almost all $v\in \Omega_K,$ the local invariant $\invap=0$ for all $P_v\in C^0(K_v)$ with $y(P_v)\neq 0.$
    	
    	Let $S= \infty_K^r \cup 2_K\cup \{v\in \Omega_K^f | v(pq)\neq 0\}$ be a finite set.   Suppose that $v\in \Omega_K\backslash S,$ then $v$ is an odd place. Take an arbitrary $P_v\in C^0(K_v)$ with $y(P_v)\neq 0.$ If $\invap=1/2,$ then $(y,p)_v=-1$ at $P_v.$ By Chevalley-Warning theorem  \cite[Chapter I \S 2, Corollary 2]{Se73}, the valuation $v(y)$ is odd at $P_v,$ which implies $v(y)>0.$ So the polynomial $x^4-p$ has a $\FF_v$-root. By Hensel's lemma, we have $p\in K_v^{\times 2},$ which contradicts $(y,p)_v=-1.$ So $\invap=0.$
    \end{proof}
	
	We make the following assumption on the base field and the parameters of the equation (\ref{equation})  so that the curve given by this equation violates the local-global principle.

\subsection{Assumption on a number field}\label{subsection: assumption on a number field}Let $K$ be a number field. For the ideal class group of $K$ is finite, we take a positive integer $N$ such that $\Ocal_K[1/N]$ is a principle ideal domain. Let $S_2=\infty_K^r\cup 2_K\cup \{v\in \Omega_K^f | v(N)\neq 0\}$ be a finite set.  We make the following assumption on the number field $K.$
\begin{assumption}\label{assumption: assumption on a number field} For a number field $K,$ we assume that there exist a pair of coprime integers $(p,~q)$ and an odd place $v_0\in \Omega_K$ satisfying the following assumption.	
	\begin{enumerate}
		\item\label{assumption 1} $p$ is a prime element,
		\item\label{assumption 2} $p\in K_v^{\times 4}$  for all $v\in S_2,$ 
		\item\label{assumption 3} $q\in K_p^{\times 2}\backslash K_p^{\times 4},$
		\item\label{assumption 4}  $\sharp \FF_{v_0}\equiv 3\mod 4,$ 
		\item\label{assumption 5}  $v_0(q)=1,$	
		\item\label{assumption 6} $\{v\in \Omega_K^f | v(q)\neq 0\}\backslash \{v_0\} \subset \{v\in \Omega_K^f | v(N)\neq 0\}.$
	\end{enumerate}
\end{assumption}

\subsection{The local-global principle}
The following proposition states that the curve given by the equation (\ref{equation}) with parameters satisfy Assumption \ref{assumption: assumption on a number field} has local points.
\begin{prop}\label{proposition: exist of local point}
	For a number field $K,$ we assume that there exist a pair of coprime integers $(p,~q)$ and an odd place $v_0\in \Omega_K$ satisfying Assumption \ref{assumption: assumption on a number field}. let $C^0$ be the plane curve defined by the equation $qy^2=x^4-p$ in $K[x,y].$ Then $C^0(K_v)\neq \emptyset$ for all $v\in \Omega_K.$
\end{prop}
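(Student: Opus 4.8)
The plan is to reduce the statement to Corollary \ref{Cor: exist local points}, whose hypotheses are that $p$ is a prime element and that $p\in K_v^{\times 4}$ for every $v$ in $S_1=\infty_K^r\cup 2_K\cup\{v\in\Omega_K^f\mid v(q)\neq 0\}$. The first hypothesis is exactly condition (\ref{assumption 1}), so it remains to verify the fourth-power condition at every place of $S_1$. I would first dispose of all places of $S_1$ other than $v_0$: since $\infty_K^r$ and $2_K$ lie in $S_2$, and since condition (\ref{assumption 6}) forces $\{v\mid v(q)\neq 0\}\setminus\{v_0\}\subseteq\{v\mid v(N)\neq 0\}\subseteq S_2$, we have $S_1\setminus\{v_0\}\subseteq S_2$; hence condition (\ref{assumption 2}) yields $p\in K_v^{\times 4}$ for all such $v$. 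The entire difficulty is therefore concentrated at the single place $v_0$, which need not lie in $S_2$.

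To treat $v_0$, I would exploit the product formula $\prod_{v\in\Omega_K}(p,q)_v=1$ for the Hilbert symbol. The strategy is to show that every local factor except the one at $v_0$ is trivial, thereby pinning down $(p,q)_{v_0}=1$. Concretely: at complex places the symbol is automatically $1$; at each $v\in S_2$ (which covers the real and $2$-adic places and all odd places dividing $N$) condition (\ref{assumption 2}) gives $p\in K_v^{\times 2}$, so $(p,q)_v=1$; at an odd finite place where both $p$ and $q$ are units the symbol vanishes as well; and at $v_p$ condition (\ref{assumption 3}) gives $q\in K_{v_p}^{\times 2}$, so $(p,q)_{v_p}=1$. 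Since $p$ is prime and coprime to $q$ (so $v_p(q)=0$ and $v_p\neq v_0$), the places just enumerated exhaust $\Omega_K\setminus\{v_0\}$, and the product formula forces $(p,q)_{v_0}=1$.

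The crux of the argument is then the passage from $(p,q)_{v_0}=1$ to $p\in K_{v_0}^{\times 4}$. Because $v_0$ is odd with $v_0(p)=0$ and $v_0(q)=1$ (condition (\ref{assumption 5})), the symbol $(p,q)_{v_0}$ equals the quadratic residue symbol of $p$ in $\FF_{v_0}$, so $(p,q)_{v_0}=1$ says precisely that $p$ is a square modulo $v_0$. Here condition (\ref{assumption 4}) enters decisively: when $\sharp\FF_{v_0}\equiv 3\bmod 4$ the group $\FF_{v_0}^\times$ is cyclic of order $\equiv 2\bmod 4$, so its subgroup of squares and its subgroup of fourth powers coincide, both being the unique subgroup of index $2$. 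Thus $p$ is in fact a fourth power modulo $v_0$, and Hensel's lemma applied to $t^4-p$ at the odd place $v_0$ lifts this to $p\in K_{v_0}^{\times 4}$.

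Combining the two steps gives $p\in K_v^{\times 4}$ for every $v\in S_1$, so Corollary \ref{Cor: exist local points} applies and yields $C^0(K_v)\neq\emptyset$ for all $v\in\Omega_K$. I expect the main obstacle to be verifying that the product-formula bookkeeping really leaves only $v_0$; this rests on the set-theoretic inclusions coming from (\ref{assumption 6}) and on the fact that $v_p$ is odd, which follows from (\ref{assumption 2}) since a prime element $p$ with $v_p(p)=1$ cannot be a fourth power at a $2$-adic place. The residue-field observation that squares equal fourth powers when $\sharp\FF_{v_0}\equiv 3\bmod 4$ is the one genuinely arithmetic input, and it is exactly what condition (\ref{assumption 4}) is designed to supply.
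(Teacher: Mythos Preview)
Your proof is correct and follows essentially the same approach as the paper: reduce to Corollary \ref{Cor: exist local points}, observe that $S_1\setminus\{v_0\}\subset S_2$ via condition (\ref{assumption 6}), and then at $v_0$ use the Hilbert-symbol product formula together with condition (\ref{assumption 4}) (so that $\FF_{v_0}^{\times 2}=\FF_{v_0}^{\times 4}$) and Hensel's lemma to obtain $p\in K_{v_0}^{\times 4}$. Your product-formula bookkeeping is more explicit than the paper's, which simply invokes $q\in K_p^{\times 2}$ and leaves the remaining places implicit, but the argument is the same.
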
 

\begin{proof}
	We will check that the parameters satisfy all conditions of Corollary \ref{Cor: exist local points}.  Let $S_1=\infty_K^r \cup 2_K\cup \{v\in \Omega_K^f | v(q)\neq 0\}$ as in Corollary \ref{Cor: exist local points}, and let $S_2=\infty_K^r\cup 2_K\cup \{v\in \Omega_K^f | v(N)\neq 0\}$ as in Subsection \ref{assumption: assumption on a number field}, then $S_1\backslash \{v_0\}\subset S_2.$ By the choice of $p$ and Corollary \ref{Cor: exist local points}, we only need to prove that $p\in K_{v_0}^{\times 4}.$ By the product formula $\prod_{v\in \Omega_K}(p,q)_v=1$ and $q\in K_p^{\times 2},$ we have $(p,q)_{v_0}=1.$ By Assumption (\ref{assumption 5}), the reduction $\bar{p}\in \FF_{v_0}^{\times 2}.$ By Assumption (\ref{assumption 4}) that $\sharp \FF_{v_0}\equiv 3\mod 4,$ we have $\FF_{v_0}^{\times 2}=\FF_{v_0}^{\times 4},$ so $\bar{p}\in \FF_q^{\times 4}.$ By Hensel's lemma, we have $p\in K_{v_0}^{\times 4}.$ 
\end{proof}

By Lemma \ref{lemma: Brauer group extension}, the quaternion algebra class $A=(y,p)\in \Br(K(C^0))[2]$ belongs to $\Br(C)[2].$ 
For the curve given in Proposition \ref{proposition: exist of local point}, we calculate the local invariant $\invap$ for all $P_v\in C(K_v)$ and all $v\in \Omega_K.$ 

\begin{lemma}\label{lemma: violating local-global principle}
		For a number field $K,$ we assume that there exist a pair of coprime integers $(p,~q)$ and an odd place $v_0\in \Omega_K$ satisfying Assumption \ref{assumption: assumption on a number field}. Let $C$ be the smooth projective model of the plane curve defined by the equation $qy^2=x^4-p$ in $K[x,y].$ Let $A=(y,p)\in \Br(C)[2].$ Then, for any $v\in \Omega_K,$ and any $P_v\in C(K_v),$ 
			\begin{equation*}
				\inv_v(A(P_v))=\begin{cases}
					0& if\quad v\neq v_p,\\
					1/2 & if \quad v= v_p.
				\end{cases}
			\end{equation*}
\end{lemma}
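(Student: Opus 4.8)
The plan is to reduce the computation of $\inv_v(A(P_v))$ to Hilbert symbols: since $A=(y,p)$, at a point $P_v$ where $y$ is a finite nonzero coordinate the value $A(P_v)$ is the class of the quaternion algebra $(y(P_v),p)$ over $K_v$, so that $\inv_v(A(P_v))=\half$ exactly when $(y(P_v),p)_v=-1$ and $0$ otherwise; the remaining points (where $y$ vanishes or has a pole) are handled below using that $A$ extends to $\Br(C)$ by Lemma~\ref{lemma: Brauer group extension}. I would first dispose of every place at which $p$ is a square in $K_v$, since there $(\,\cdot\,,p)_v$ is identically trivial and $\inv_v(A(P_v))=0$. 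By Assumption~(\ref{assumption 2}) this covers all $v\in S_2$, and the computation in the proof of Proposition~\ref{proposition: exist of local point} shows $p\in K_{v_0}^{\times4}$, so it also covers $v=v_0$; complex places give $0$ because $\Br(\CC)=0$. Note $v_p\notin S_2$, since $v_p(p)=1$ is incompatible with Assumption~(\ref{assumption 2}); in particular $v_p$ is odd.

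For the remaining places $v\neq v_p$, Assumptions~(\ref{assumption 1}) and~(\ref{assumption 6}) together with $v\notin S_2$ force $v$ to be odd with $v(p)=v(q)=0$, so $p,q$ are units. If $\bar p\in\FF_v^{\times2}$ then $p\in K_v^{\times2}$ and we are done as above; otherwise I would show $v(y(P_v))$ is even for every $P_v\in C(K_v)$, which gives $(y(P_v),p)_v=+1$ because for a unit $p$ at an odd place the symbol depends only on the parity of $v(y)$. The parity is read off the equation $qy^2=x^4-p$: a point with $v(y)>0$ yields $\bar p=\bar x^4$, contradicting that $\bar p$ is a nonsquare; a point with $v(y)<0$ must have $v(x)<0$, whence $2v(y)=4v(x)$; a point with $y=0$ gives $p=x^4\in K_v^{\times4}$, again excluded; and the points at infinity are treated by replacing $y$ with the regular coordinate $w=y/x^2$, for which $(y,p)_v=(w,p)_v$ with $w$ a unit. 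Hence $\inv_v(A(P_v))=0$ for all $v\neq v_p$.

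The heart of the matter is $v=v_p$, where I must prove $(y(P),p)_{v_p}=-1$ for every $P\in C(K_{v_p})$. Here $v_p(p)=1$, $q$ is a unit, and $q\in K_{v_p}^{\times2}$ by the product formula as in Proposition~\ref{proposition: exist of local point}. The key preliminary observation is that Assumption~(\ref{assumption 3}), namely $q\in K_{v_p}^{\times2}\setminus K_{v_p}^{\times4}$, can hold only when $\FF_{v_p}^{\times2}\neq\FF_{v_p}^{\times4}$, which for a finite field forces $\sharp\FF_{v_p}\equiv1\bmod4$; equivalently $-1\in\FF_{v_p}^{\times2}$, so $\left(\frac{-1}{v_p}\right)=1$ and all sign ambiguities in the odd-place symbol formula vanish. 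With $v_p(p)=1$ and this fact, I would run through the local points: if $v_p(x)=0$, reducing the equation gives $\bar y^2=\bar x^4\bar q^{-1}$, so $\bar y$ equals a square times $\overline{\sqrt q}^{\,-1}$, a nonsquare, giving $(y,p)_{v_p}=\left(\frac{\bar y}{v_p}\right)=-1$; if $v_p(x)<0$ then $v_p(y)=2v_p(x)$ is even and the unit part of $y$ is again a square times $(\sqrt q)^{-1}$, again $-1$; the cases $v_p(x)>0$ and $y=0$ do not occur because $v_p(p)=1$ is not divisible by $4$; and at the two points at infinity $(y,p)_{v_p}=(w,p)_{v_p}$ with $w=\pm1/\sqrt q$ a nonsquare unit, so both equal $-1$, the two signs agreeing precisely because $-1$ is a square at $v_p$. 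I expect the main obstacle to be exactly this $v_p$ analysis: extracting $\sqrt q$ and determining its residue symbol while using $\sharp\FF_{v_p}\equiv1\bmod4$ to keep the two points at infinity—and the affine square-root choices—mutually consistent, so that the single value $\half$ really is attained at every local point.
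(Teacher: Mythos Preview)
Your proof is correct and follows essentially the same route as the paper's. The paper streamlines two points you handle by hand: it invokes local constancy of the evaluation map on $C(K_v)$ to reduce once and for all to affine points with $y\neq 0$ (so your separate treatment of $y=0$ and of the points at infinity via $w=y/x^2$, together with the $\sharp\FF_{v_p}\equiv 1\bmod 4$ consistency check, becomes unnecessary), and at $v=v_p$ it argues in one stroke that $qy^2=x^4-p\in K_{v_p}^{\times 4}$ together with $q\in K_{v_p}^{\times 2}\setminus K_{v_p}^{\times 4}$ forces $y\notin K_{v_p}^{\times 2}$, which with $v_p(y)$ even and $v_p(p)=1$ gives $(y,p)_{v_p}=-1$ without the residue-symbol casework.
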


\begin{proof}
	For the evaluation of $A$ on $C(K_v)$ is locally constant, the implicit function theorem implies that we only need to calculate $\inv_v(A(P_v))$ for all $P_v\in C^0(K_v)$ with $y(P_v)\neq 0.$
	
		Let $S= \infty_K^r \cup 2_K\cup \{v\in \Omega_K^f | v(pq)\neq 0\}$ be a finite set. As in the proof of Lemma \ref{lemma: Brauer group extension}, for any $v\in \Omega_K\backslash S,$ and any $P_v\in C^0(K_v)$ with $y(P_v)\neq 0,$ we have $\inv_v(A(P_v))=0.$ \\
			Suppose that $v\in S\backslash \{v_0,v_p\}.$ Let $S_2=\infty_K^r\cup 2_K\cup \{v\in \Omega_K^f | v(N)\neq 0\}$ as in Subsection \ref{assumption: assumption on a number field}, then $v\in S_2.$ By Assumption (\ref{assumption 2}), we have $\inv_v(A(P_v))=0$ for all  $P_v\in C^0(K_v)$  with $y(P_v)\neq 0.$\\
		Suppose that  $v=v_0.$ By the same argument as in the proof of Proposition \ref{proposition: exist of local point}, we have  $p\in K_{v}^{\times 4},$ which implies  $\inv_v(A(P_v))=0$ for all  $P_v\in C^0(K_v)$  with $y(P_v)\neq 0.$
		
		Suppose that  $v=v_p.$ Take an arbitrary $P_v\in C^0(K_v).$   By comparing the valuation of the equation (\ref{equation}), we have $v(x)\leq 0$ and $x^4-p\in K_v^{\times 4}$ at $P_v.$ By Assumption (\ref{assumption 3}), we have $q\in K_v^{\times 2}\backslash K_v^{\times 4}.$ Then $y\notin K_v^{2},$ so $\inv_v(A(P_v))=1/2$ at $P_v.$	
\end{proof}
	Combining Lemma \ref{lemma: violating local-global principle} with the global reciprocity law, we have
the following proposition that the curve given by the equation (\ref{equation}) with parameters satisfying Assumption \ref{assumption: assumption on a number field} has no $K$-rational point.

\begin{prop}\label{proposition: no rational point}
	For a number field $K,$ we assume that there exist a pair of coprime integers $(p,~q)$ and an odd place $v_0\in \Omega_K$ 
	satisfying Assumption \ref{assumption: assumption on a number field}. Let $C$ be the smooth projective model of the plane curve defined by the equation $qy^2=x^4-p$ in $K[x,y].$ Then $C(K)= \emptyset.$
\end{prop}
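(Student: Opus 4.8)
The plan is to use the Brauer--Manin obstruction coming from the single quaternion algebra class $A = (y,p) \in \Br(C)[2]$ established in Lemma \ref{lemma: Brauer group extension}, together with the complete local computation provided by Lemma \ref{lemma: violating local-global principle}. The strategy is a proof by contradiction: suppose $C(K) \neq \emptyset$ and fix a rational point $P \in C(K)$. Such a point is simultaneously a local point $P_v \in C(K_v)$ for every $v \in \Omega_K$, obtained by the diagonal embedding $C(K) \hookrightarrow \prod_{v} C(K_v)$.

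The key step is to invoke the global reciprocity law from class field theory, which states that for any element $A \in \Br(K(C^0))$ belonging to $\Br(C)$ and any $K$-rational point $P$, the sum of local invariants vanishes:
\begin{equation*}
	\sum_{v \in \Omega_K} \inv_v(A(P_v)) = 0 \in \QQ/\ZZ.
\end{equation*}
This is the fundamental exact sequence $0 \to \Br(K) \to \bigoplus_v \Br(K_v) \xrightarrow{\sum \inv_v} \QQ/\ZZ \to 0$ applied after evaluating the global Brauer class $A(P) \in \Br(K)$ and using functoriality of the invariant maps under the localizations $K \hookrightarrow K_v$. Here I would stress that the finiteness of the sum is guaranteed because $A(P_v) = 0$ for almost all $v$, which is part of what Lemma \ref{lemma: violating local-global principle} records.

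Finally I would substitute the explicit local invariants. By Lemma \ref{lemma: violating local-global principle}, every term $\inv_v(A(P_v))$ vanishes except at the single place $v = v_p$, where it equals $1/2$. Hence the global sum reduces to
\begin{equation*}
	\sum_{v \in \Omega_K} \inv_v(A(P_v)) = \inv_{v_p}(A(P_{v_p})) = 1/2 \neq 0 \in \QQ/\ZZ,
\end{equation*}
which directly contradicts the global reciprocity law. Therefore no such $P$ exists and $C(K) = \emptyset$.

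The real content of this argument has already been front-loaded into Lemma \ref{lemma: violating local-global principle}, so the present proposition is essentially a formal deduction; the only genuine ingredient is the reciprocity law, which is standard. The main point requiring care is the verification that $A$ is a single well-defined class in $\Br(C)$ whose evaluation at a rational point lands in $\Br(K)$ so that reciprocity applies --- this is precisely guaranteed by Lemma \ref{lemma: Brauer group extension}, which shows $A \in \Br(C)[2]$ rather than merely $A \in \Br(K(C^0))$. I do not anticipate any further obstacle, since the odd-place-ness of $v_p$ and the parity bookkeeping that make the single nonzero invariant equal to exactly $1/2$ are all absorbed into the preceding lemmas.
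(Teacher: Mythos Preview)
Your proposal is correct and follows exactly the same approach as the paper: assume a rational point exists, apply the global reciprocity law to the evaluation of $A=(y,p)\in\Br(C)[2]$ at that point, and derive a contradiction from Lemma \ref{lemma: violating local-global principle}, which forces the sum of local invariants to be $1/2$. The paper's proof is simply a terser version of what you wrote.
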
 

\begin{proof}
	If  the curve $C$ has a $K$-rational point $P,$ then by the global reciprocity law, the sum $\sum_{v\in \Omega_K}\inv_v(A(P))=0$ in $\QQ/\ZZ.$ But by Lemma \ref{lemma: violating local-global principle}, 	the sum is $1/2$, which is nonzero in $\QQ/\ZZ.$
\end{proof}

Applying Proposition \ref{proposition: no rational point}, the Jacobian of $C$ will have the following proposition.
	\begin{prop}\label{prop: nontrivial Sha}
	For a number field $K,$ we assume that there exist a pair of coprime integers $(p,~q)$ and an odd place $v_0\in \Omega_K$ satisfying Assumption \ref{assumption: assumption on a number field}. Then there exists an elliptic curve $E$ defined over $K$ such that $\Sha (K,E)[2]\neq 0.$ 
\end{prop}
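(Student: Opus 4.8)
The plan is to deduce Proposition \ref{prop: nontrivial Sha} from Proposition \ref{proposition: no rational point} together with Proposition \ref{proposition: exist of local point} by taking $E$ to be the Jacobian of the curve $C.$ Since $C$ is a smooth, projective, geometrically connected genus-one curve over $K$ (Corollary \ref{cor: genus =1}), its Jacobian $E = \mathrm{Jac}(C)$ is an elliptic curve over $K,$ and $C$ is naturally a torsor (principal homogeneous space) under $E.$ The isomorphism class of this torsor defines an element $[C] \in H^1(K, E).$ The whole argument hinges on showing that $[C]$ lies in $\Sha(K,E)$ and is nontrivial, and that it is killed by $2.$

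First I would record that $C$ has points everywhere locally and no global point. By Proposition \ref{proposition: exist of local point}, $C(K_v) \neq \emptyset$ for all $v \in \Omega_K,$ so the torsor $[C]$ becomes trivial in $H^1(K_v, E)$ for every place $v$ (a torsor with a rational point is trivial). This is precisely the statement that $[C]$ lies in the kernel $\Sha(K,E) = \ker\big(H^1(K,E) \to \prod_{v} H^1(K_v,E)\big).$ Next, by Proposition \ref{proposition: no rational point}, $C(K) = \emptyset,$ so $[C] \neq 0$ in $H^1(K,E);$ hence $[C]$ is a nonzero element of $\Sha(K,E).$

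It remains to see that $[C]$ is $2$-torsion, i.e. $[C] \in \Sha(K,E)[2].$ For this I would use the geometry of the model: the projection $\alpha\colon C \to \PP^1$ given by $(x,y)\mapsto x$ is a degree-$2$ map, so $C$ carries a $K$-rational divisor class of degree $2$ (the pullback of a point of $\PP^1,$ or equivalently the hyperelliptic-type $g^1_2$). The period of the torsor $C$ divides the degree of any $K$-rational divisor class on $C;$ since $C$ admits a degree-$2$ rational divisor class and has no rational point (so the period is not $1$), the period is exactly $2,$ and the order of $[C]$ in $H^1(K,E)$ equals this period. Therefore $2[C] = 0,$ giving $[C] \in \Sha(K,E)[2],$ and combined with $[C]\neq 0$ we conclude $\Sha(K,E)[2] \neq 0,$ which is the assertion with $E = \mathrm{Jac}(C).$

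The main obstacle I expect is the clean justification that the \emph{order} of $[C]$ in $H^1(K,E)$ equals the period and that this period is $2$ rather than merely dividing $2.$ The bound ``order divides $2$'' follows readily from the existence of the rational degree-$2$ divisor class together with the standard relation between the torsor $C,$ its Picard torsor, and the connecting maps; the fact that it is not $1$ is exactly $C(K)=\emptyset.$ A careful treatment should either invoke the standard theory relating the period of a genus-one curve to the degrees of its rational divisor classes, or argue directly that the class $A = (y,p) \in \Br(C)[2]$ of Lemma \ref{lemma: violating local-global principle} pairs with $E(\mathbb{A}_K)$ through the $2$-descent/Cassels--Tate pairing to exhibit $[C]$ as a nontrivial $2$-torsion class; the Brauer element already computed there provides the $2$-torsion structure and simultaneously reproves nontriviality via the reciprocity obstruction.
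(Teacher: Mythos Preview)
Your argument is correct and follows the same overall strategy as the paper: take $E=\mathrm{Jac}(C)$, use Proposition~\ref{proposition: exist of local point} to place $[C]$ in $\Sha(K,E)$, use Proposition~\ref{proposition: no rational point} to get $[C]\neq 0$, and then show $2[C]=0$.

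The only point of divergence is the justification of $2[C]=0$. The paper observes that $C(K(\sqrt{q}))\neq\emptyset$ (over $K(\sqrt{q})$ the equation becomes $y'^2=x^4-p$, which has rational points at infinity in the smooth model) and then applies restriction--corestriction for the quadratic extension $K(\sqrt{q})/K$. You instead use the degree-$2$ map $\alpha\colon C\to\PP^1$ to produce a $K$-rational divisor class of degree $2$, whence the period divides $2$. Both are standard one-line arguments; yours is slightly more geometric and avoids naming the splitting field, while the paper's makes explicit which quadratic extension trivialises the torsor. Your closing paragraph about period versus order is unnecessary hedging: for a genus-one torsor the period \emph{is} by definition the order of $[C]$ in $H^1(K,E)$, and the fact that it divides the degree of any $K$-rational divisor class follows from $\mathrm{Pic}^n_{C/K}$ representing $n[C]$.
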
 

\begin{proof}
	Let $C$ be the smooth projective model of the plane curve defined by the equation $qy^2=x^4-p$ in $K[x,y].$ Let $E$ be the Jacobian of $C.$ By Corollary \ref{Cor: exist local points}, the curve $E$ is an elliptic curve. Consider the class $[C]\in H^1(K, E).$ By Proposition \ref{proposition: exist of local point}, the set $C(K_v)\neq \emptyset$ for every $v\in \Omega_K,$ which implies $[C]\in \Sha (K,E).$ By Proposition \ref{proposition: no rational point}, the set $C(K)=\emptyset,$ which implies $[C]\neq 0.$ For $C(K(\sqrt{q}))\neq\emptyset,$  the standard restriction-corestriction argument implies $2[C]=0,$ so $[C]\in \Sha (K,E)[2]$ is a nonzero element. 
\end{proof}

    \section{Assumption \ref{assumption: assumption on a number field} holds for number fields not containing $\QQ(i)$}\label{section: check the assumption}
    
    Let $K$ be a number field not containing $\QQ(i).$  In this section, we will find a pair of coprime integers $(p,~q)$ and an odd place $v_0\in \Omega_K$ satisfying Assumption \ref{assumption: assumption on a number field}.

    The following lemma is a consequence of the \v{C}ebotarev density theorem.
    
       		\begin{lemma}\label{lemma mod 4=3 prime element}    
       			Let $K$ be a number field not containing $\QQ(i).$ Let $P_K$ be the set of prime numbers $p$ such that
       			\begin{itemize}
       				\item  $p$ splits over $K,$ i.e. there exists a place $v\in \Omega_K$ such that $K_v\cong \QQ_p.$
       				\item $p\equiv 3\mod 4.$ 
       			\end{itemize}       			
       			 Then the set $P_K$ is a infinite set.
    \end{lemma}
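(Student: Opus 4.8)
The plan is to realize both defining conditions of $P_K$ as a single Frobenius condition and then invoke the \v{C}ebotarev density theorem. Let $\widetilde{K}$ be the Galois closure of $K/\QQ$ and set $M=\widetilde{K}(i)$; since $\widetilde{K}/\QQ$ and $\QQ(i)/\QQ$ are both Galois, so is $M/\QQ$. Write $G=\Gal(M/\QQ)$ and $H=\Gal(M/K)$, so that $K=M^H$ and $\QQ(i)\subset M$. The hypothesis $\QQ(i)\not\subset K$ is equivalent to $i\notin K$, hence $K(i)/K$ has degree $2$; as $M/K$ is Galois, the restriction $\Gal(M/K)\to\Gal(K(i)/K)$ is surjective, so there exists $\sigma\in H$ with $\sigma(i)=-i$. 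This single element $\sigma$ is the one that will simultaneously force a residue degree $1$ prime of $K$ above $p$ and the congruence $p\equiv 3\bmod 4$.

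First I would record two properties of the conjugacy class $C$ of $\sigma$ in $G$. Because $\QQ(i)/\QQ$ is abelian, the restriction map $G\to\Gal(\QQ(i)/\QQ)$ is a homomorphism that is constant on conjugacy classes; since $\sigma$ restricts to complex conjugation, every $\tau\in C$ satisfies $\tau(i)=-i$. On the other hand, $\sigma\in C\cap H$ by construction, so $C$ meets $H$.

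Next I would apply the \v{C}ebotarev density theorem to $C$: there are infinitely many rational primes $p$, unramified in $M$, whose Frobenius class $\mathrm{Frob}_p$ equals $C$. Discarding the finitely many ramified primes (in particular $p=2$), I claim each remaining $p$ lies in $P_K$. For the congruence: $p$ is unramified in $\QQ(i)$ and $\mathrm{Frob}_p$ restricts to the nontrivial element of $\Gal(\QQ(i)/\QQ)$, so $p$ is inert in $\QQ(i)$, which for odd $p$ is exactly $p\equiv 3\bmod 4$. For the splitting in $K$: the residue degrees of the primes of $K$ above $p$ are the cycle lengths of $\mathrm{Frob}_p$ acting on the coset space $G/H$, so a prime of residue degree $1$ exists as soon as some representative of $\mathrm{Frob}_p$ fixes a coset $gH$, i.e.\ $g^{-1}\mathrm{Frob}_p\,g\in H$ for some $g\in G$. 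Since $\mathrm{Frob}_p$ is conjugate to $\sigma\in H$, such a $g$ exists; the corresponding prime $\mathfrak p$ of $K$ is unramified with residue degree $1$, whence $K_{\mathfrak p}\cong\QQ_p$ and $p$ splits over $K$ in the required sense.

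The step that needs the most care is the splitting dictionary in the last paragraph: $K/\QQ$ is in general not Galois, so one cannot read off the splitting of $p$ directly from $\mathrm{Frob}_p\in G$, but must pass through the action of $\langle\mathrm{Frob}_p\rangle$ on $G/H$ and translate ``$C$ meets $H$'' into ``some orbit on $G/H$ is a fixed point''. Once this standard translation is in place, together with the conjugacy-invariance of the restriction to $\QQ(i)$, both conditions hold for every prime with $\mathrm{Frob}_p=C$, and \v{C}ebotarev supplies infinitely many of them, so $P_K$ is infinite.
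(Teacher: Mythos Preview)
Your proof is correct and follows essentially the same route as the paper's: both pass to the Galois closure $\widetilde K$ of $K/\QQ$, adjoin $i$, pick $\sigma\in\Gal(\widetilde K(i)/K)$ acting nontrivially on $i$, and invoke \v{C}ebotarev for the conjugacy class of $\sigma$ in $\Gal(\widetilde K(i)/\QQ)$. Your presentation is a touch more explicit about the splitting dictionary (via the action on $G/H$) than the paper's decomposition-group formulation, but the argument is the same.
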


    \begin{proof}
	 Let $\overline{K}$ be an algebraic closure field of $K.$ Let $L$ be the Galois closure of $K/\QQ$ in $\overline{K}.$ For $\QQ(i)\nsubseteq K,$ the extension $L(i)/K$ is a nontrivial Galois extension. Take an element $\sigma\in \Gal(L(i)/K)$ such that the restriction of $\sigma$ on $K(i)$ is nontrivial. Let $P_{L(i)}$ be the set of all prime numbers $p$ such that there exists a place $v|p$ in $\Omega_{L(i)}$ satisfying that the subgroup group $\Gal(L(i)_{v}/\QQ_p)\subset \Gal(L(i)/\QQ)$ is generated by $\sigma.$
    	 Similar to the proof of \cite[Chapt. VII Theorem 13.4]{Ne99}, the set $P_{L(i)}$ has positive density. Take an arbitrary prime number $p_0\in P_{L(i)}$  unramified over $L(i).$ Then there exists a place $v_0|p_0$ in $\Omega_{L(i)}$ such that $\Gal(L(i)_{v_0}/\QQ_{p_0})$ is generated by $\sigma.$ Let $w_0',~w_0$ be the restriction of $v_0$ on $K(i)$ and $K$ respectively. Then $K_{w_0}\cong \QQ_{p_0},$ and $K(i)_{w_0'}/K_{w_0}$ is a nontrivial unramified extension. So $p_0$ splits over $K,$ and $p_0\equiv 3\mod 4.$ So the set $P_K$ is a infinite set.
    \end{proof}

        \begin{remark}
    	One can calculate that the density $d(P_K)\geq \frac{1}{2[K:\QQ]}.$ Here $[K:\QQ]$ is the degree of the extension $K/\QQ.$
    \end{remark}

Given a number field $K$ not containing $\QQ(i),$ let $N$ and $S_2=\infty_K^r\cup 2_K\cup \{v\in \Omega_K^f | v(N)\neq 0\}$ be as in Subsection \ref{subsection: assumption on a number field}. We will choose $(p,~q)$ and an odd place $v_0\in \Omega_K$ satisfying Assumption \ref{assumption: assumption on a number field}.
\begin{prop}\label{prop: existence of parameters}
	Let $K$ be a number field not containing $\QQ(i).$
	Then there exist a pair of coprime integers $(p,~q)$ and an odd place $v_0\in \Omega_K$ satisfying Assumption \ref{assumption: assumption on a number field}.
\end{prop}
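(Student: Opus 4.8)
The plan is to construct the triple $(p,q,v_0)$ in two stages, pinning down the prime element $p$ (hence its place $v_p$) first, and only afterwards producing $v_0$ together with $q$, so that the delicate quartic condition (\ref{assumption 3}) is imposed at a place whose residue field is already under control.

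First I would fix a modulus $\mathfrak m$ supported on $S_2$: at each finite $v\in S_2$ take the exponent so large that $1+\pfr_v^{n_v}\subseteq K_v^{\times 4}$ (possible since the fourth-power map is open on $K_v^\times$), and include every place of $\infty_K^r$. Let $H_{\mathfrak m}$ be the corresponding ray class field of $K$. Any prime ideal in the trivial ray class modulo $\mathfrak m$ is principal with a totally positive generator $p\equiv 1\pmod{\mathfrak m}$, and such a $p$ automatically satisfies $p\in K_v^{\times 4}$ for every $v\in S_2$, giving Assumptions (\ref{assumption 1}) and (\ref{assumption 2}). By the \v{C}ebotarev density theorem applied to $H_{\mathfrak m}\cdot K(i)$ I would choose such a prime so that $v_p=(p)$ moreover splits completely in $K(i)$; then $-1\in\FF_{v_p}^{\times 2}$, i.e. $\sharp\FF_{v_p}\equiv 1\pmod 4$, which guarantees $(\FF_{v_p}^{\times})^{2}\neq(\FF_{v_p}^{\times})^{4}$, so that square non-fourth-power residues exist at $v_p$.

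Next, with $v_p$ and $\FF_{v_p}$ fixed, I would produce $v_0$ and $q$ simultaneously. For a prime $v_0$ coprime to $2N$ and different from $v_p$, the ideal $\pfr_{v_0}$ has a generator $\pi_0\in\Ocal_K[1/N]$, well defined modulo $\Ocal_K[1/N]^\times$; clearing denominators by a power of $N$ turns it into an integer $q$ with $v_0(q)=1$ and support away from $v_0$ contained in the primes dividing $N$, which is exactly Assumptions (\ref{assumption 5}) and (\ref{assumption 6}), and $(p,q)$ are coprime since $v_p\notin\{v_0\}\cup\{v\mid N\}$. As the choices are made, the residue $\bar q\in\FF_{v_p}^{\times}$ ranges over a full coset of the image $\bar V$ of $\Ocal_K[1/N]^\times$ in $\FF_{v_p}^{\times}$, and the \v{C}ebotarev symbol of $v_0$ in the $S$-ray class field of modulus $v_p$ lets me prescribe which coset. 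I would choose this symbol so that the coset meets the square non-fourth-powers, forcing $\bar q\in(\FF_{v_p}^{\times})^{2}\setminus(\FF_{v_p}^{\times})^{4}$ and hence, by Hensel's lemma at the odd place $v_p$, the quartic Assumption (\ref{assumption 3}). Finally I would invoke Lemma \ref{lemma mod 4=3 prime element} (equivalently, the requirement that $v_0$ be non-split in $K(i)$) to arrange $\sharp\FF_{v_0}\equiv 3\pmod 4$, Assumption (\ref{assumption 4}).

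The crux is that the two conditions on $v_0$ — the residue prescription at $v_p$ and the non-split behaviour in $K(i)$ — can be met at once. This is where the hypothesis $\QQ(i)\not\subseteq K$ is essential: since $v_p$ is odd and unramified in $K(i)$, if $K(i)$ were contained in the $S$-ray class field of modulus $v_p$ it would already lie in its maximal subextension unramified at $v_p$, which is $K$ itself because $\Ocal_K[1/N]$ is a principal ideal domain; thus $K(i)$ is not contained in that ray class field, the two Galois conditions live in complementary factors of the compositum, and \v{C}ebotarev yields a positive density of admissible $v_0$. I expect verifying this independence — together with the bookkeeping of the residue coset so that a square non-fourth-power is actually hit — to be the main obstacle, while the construction of $p$ and the remaining congruence conditions are routine class field theory input.
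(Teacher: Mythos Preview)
Your proposal is correct, but it inverts the paper's order of construction and so relies on a different class-field-theoretic mechanism.

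The paper fixes $(v_0,q)$ \emph{first}: it picks $v_0$ with $\sharp\FF_{v_0}\equiv 3\pmod 4$ via Lemma~\ref{lemma mod 4=3 prime element}, takes any $q$ with the required support, and then finds $p$ by a single \v{C}ebotarev application in the Kummer extension $K_{\mfr}(\sqrt[4]{q})/K$, asking for Frobenius equal to $\sigma^2$. Because $\QQ(i)\subset K_{\mfr}$, this extension is genuinely cyclic of order~$4$ over $K_{\mfr}$, so the condition $q\in K_p^{\times 2}\setminus K_p^{\times 4}$ falls out immediately, and splitting of $v_p$ in $K(i)$ (hence $\sharp\FF_{v_p}\equiv 1\pmod 4$) comes for free from $K(i)\subset K_{\mfr}$.

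You instead fix $p$ first, arranging $\sharp\FF_{v_p}\equiv 1\pmod 4$ explicitly, and then locate $v_0$ by \v{C}ebotarev in the compositum $L\cdot K(i)$, where $L$ is the $S$-ray class field of modulus $\pfr_{v_p}$. This costs you two extra ingredients the paper avoids: the linear-disjointness argument $K(i)\cap L=K$ (which you handle correctly, using that $L/K$ is totally ramified at $v_p$ while $K(i)/K$ is not, and that the $\Ocal_K[1/N]$-class group is trivial), and the coset bookkeeping needed to adjust $q$ by an element of $\Ocal_K[1/N]^\times$ so that $\bar q$ actually lands in $(\FF_{v_p}^\times)^2\setminus(\FF_{v_p}^\times)^4$ while keeping $q\in\Ocal_K$ (achievable by multiplying by $N^k$ with $k$ a large multiple of the order of $\bar N$). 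The paper's Kummer-theoretic route is shorter and sidesteps both issues; your route is more hands-on about the residue arithmetic at $v_p$ but arrives at the same conclusion.
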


\begin{proof}
		Let $\mfr_{\infty}$ be the product of all places in $\infty_K^r,$ and let $\mfr=16N^2 \mfr_{\infty}$ be a modulus of $K.$ Let $K_\mfr$ be the ray class field of modulus $\mfr,$ then $\QQ(i)\subset  K_\mfr.$
	Let $I_\mfr$ be the group of fractional ideals that are prime to  $16N^2.$ Let $P_\mfr\subset I_\mfr$ be the subgroup of principal ideals generated by some $a\in K^\times$ with $a\equiv 1 \mod 16N^2$ and $\tau_v(a)>0$ for all $v\in \infty_K^r.$ Then by Artin reciprocity law
	(cf. \cite[Chapter VI Theorem 7.1 and Corollary 7.2]{Ne99}), the classical Artin homomorphism $\theta$ gives an exact sequence:
	\begin{equation}\label{artin exact sequence}
		0\to P_\mfr\hookrightarrow I_\mfr \stackrel{\theta}\to \Gal(K_\mfr/K)\to 0.
	\end{equation}
	By Lemma \ref{lemma mod 4=3 prime element}, we take a place $v_0\in \Omega_K$ such that $\sharp \FF_{v_0}\equiv 3 \mod 4,$ and $v_0$ unramified over $K_\mfr.$ For $\Ocal_K[1/N]$ is a principle ideal domain, we can take $q\in \Ocal_K$ satisfying Assumption (\ref{assumption 5})  and Assumption (\ref{assumption 6}). For $v_0$ is unramified over $K_\mfr$ and $v_0(q)=1,$ the Galois group $\Gal(K_\mfr(\sqrt[4]{q})/K_\mfr)\cong \ZZ/4\ZZ$ denoted a generator by $\sigma.$ 
	By \v{C}ebotarev density theorem \cite[Chapt. VII Theorem 13.4]{Ne99}, there exist a place $v_1'$ in $\Omega_{K_\mfr(\sqrt[4]{q})}$ and its restriction $v_1\in \Omega_K$ such that $\Gal(K_\mfr(\sqrt[4]{q})_{v_1'}/K_{v_1})$ is generated by $\sigma^2.$ Then $v_1$ splits completely in $K_\mfr.$ By \cite[Chapt. VI Corollary 7.4]{Ne99} and the exact sequence (\ref{artin exact sequence}), the place $v_1$ is associated to a prime element $p\in P_\mfr.$ So Assumption (\ref{assumption 1}) and Assumption (\ref{assumption 2}) hold. By the choice of $v_1,$ the assumption $q\in K_p^{\times 2}\backslash K_p^{\times 4}$ holds.
\end{proof}

	
	Combining Proposition \ref{prop: nontrivial Sha} and Proposition \ref{prop: existence of parameters}, we get our theorem.
	
	\begin{thm}\label{thm: main theorem}
	For any number field $K$ not containing $\QQ(i),$ there exists an elliptic curve $E$ defined over $K$ such that $\Sha (K,E)[2]\neq 0.$ 
\end{thm}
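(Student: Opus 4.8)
The plan is to deduce the final theorem directly by combining the two propositions cited immediately before it, namely Proposition \ref{prop: nontrivial Sha} and Proposition \ref{prop: existence of parameters}. The key observation is that Theorem \ref{thm: main theorem} is a pure existence statement: for a number field $K$ not containing $\QQ(i)$, I must produce an elliptic curve $E/K$ with $\Sha(K,E)[2]\neq 0$. Both ingredients I need are already available, so the argument is essentially a two-line concatenation rather than a fresh construction.

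First I would invoke Proposition \ref{prop: existence of parameters}: since $K$ does not contain $\QQ(i)$, there exist a pair of coprime integers $(p,q)$ and an odd place $v_0\in\Omega_K$ satisfying all six parts of Assumption \ref{assumption: assumption on a number field}. This is exactly where the hypothesis $\QQ(i)\not\subset K$ is consumed, via the \v{C}ebotarev-type input of Lemma \ref{lemma mod 4=3 prime element}; none of the later steps require it again. With such $(p,q,v_0)$ fixed, the hypotheses of Proposition \ref{prop: nontrivial Sha} are met verbatim.

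Second I would apply Proposition \ref{prop: nontrivial Sha} to these same parameters. That proposition hands me an elliptic curve $E/K$ --- concretely, the Jacobian of the smooth projective model $C$ of $qy^2=x^4-p$ --- together with the conclusion $\Sha(K,E)[2]\neq 0$. This $E$ is precisely the curve demanded by the theorem, so the statement follows.

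I do not expect a genuine obstacle here, since all the substantive work has been packaged into the preceding propositions: existence of local points everywhere (Proposition \ref{proposition: exist of local point}), failure of $K$-rational points via the \BMo (Proposition \ref{proposition: no rational point}), and the restriction--corestriction argument placing the class in the $2$-torsion (inside Proposition \ref{prop: nontrivial Sha}). The only point that warrants a word of care is logical bookkeeping: I must be sure that the parameters produced by Proposition \ref{prop: existence of parameters} are fed unchanged into Proposition \ref{prop: nontrivial Sha}, so that the Jacobian $E$ is well-defined and the chain $[C]\in\Sha(K,E)$, $[C]\neq 0$, $2[C]=0$ holds for this specific $E$. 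Accordingly the proof is simply: given $\QQ(i)\not\subset K$, choose $(p,q,v_0)$ by Proposition \ref{prop: existence of parameters}, then let $E$ be the curve supplied by Proposition \ref{prop: nontrivial Sha} for these parameters; this $E$ satisfies $\Sha(K,E)[2]\neq 0$, as required. \qedhere
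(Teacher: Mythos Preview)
Your proposal is correct and matches the paper's own argument exactly: the paper simply states that the theorem follows by combining Proposition \ref{prop: nontrivial Sha} and Proposition \ref{prop: existence of parameters}, which is precisely the two-step concatenation you describe.
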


	\begin{footnotesize}
		\noindent\textbf{Acknowledgements.} The author would like to thank D.S. Wei, Y. Xu and C. Lv for many fruitful discussions.  The author is grateful to B. Poonen and anonymous referees for their valuable suggestions. The author was partially supported by NSFC Grant No. 12071448.
	\end{footnotesize}

\begin{bibdiv}
	\begin{biblist}
		
		\bib{Cl09}{article}{
			author={Clark, P.~L.},
			title={Curves over global fields violating the hasse principle},
			date={2009},
			journal={Preprint, arXiv:0905.3459 [math.NT]},
		}
		
		\bib{CTS21}{book}{
			author={Colliot-Th\'el\`ene, J.-L.},
			author={Skorobogatov, A.N.},
			title={The {B}rauer-{G}rothendieck group},
			series={Ergebnisse der Mathematik und Ihrer Grenzgebiete},
			publisher={Springer-Verlag},
			date={2021},
			volume={3. Folge, 71},
		}
		
		\bib{Gr68II}{book}{
			author={Grothendieck, A.},
			title={Le groupe de {B}rauer {II}: {E}xemples et compl\'ements. {I}n},
			subtitle={Dix expos\'es sur la cohomologie des sch\'emas},
			language={French},
			series={Advanced Studies in Pure Mathematics},
			publisher={North-Holland},
			date={1968},
			volume={3},
			note={pp. 66-87},
		}
		
		\bib{Ha94}{article}{
			author={Harari, D.},
			title={M{\'e}thode des fibrations et obstruction de manin},
			date={1994},
			journal={Duke Math. J.},
			volume={75},
			pages={221\ndash 260},
		}
		
		\bib{Ha97}{book}{
			author={Hartshorne, R.},
			title={Algebraic geometry},
			series={Graduate Texts in Mathematics},
			publisher={Springer-Verlag},
			date={1997},
			volume={52},
		}
		
		\bib{Li40}{article}{
			author={Lind, C.-E.},
			title={Untersuchungen {\"u}ber die rationalen {P}unkte der ebenen
				kubischen {K}urven vom {G}eschlecht {E}ins},
			language={German},
			date={1940},
			journal={Thesis, University of Uppsala},
			volume={1940},
			pages={97},
		}
		
		\bib{LW54}{article}{
			author={Lang, S.},
			author={Weil, A.},
			title={Number of points of varieties in finit fields},
			date={1954},
			journal={Amer. J. Math.},
			volume={76},
			pages={819\ndash 827},
		}
		
		\bib{Ne99}{book}{
			author={Neukirch, J.},
			title={Algebraic number theory},
			publisher={Springer-Verlag},
			date={1999},
		}
		
		\bib{Po10a}{article}{
			author={Poonen, B.},
			title={Curves over every global field violating the local-global
				principle},
			date={2010},
			journal={J. Math. Sci.},
			volume={171},
			number={6},
			pages={782\ndash 785},
		}
		
		\bib{Re42}{article}{
			author={Reichardt, H.},
			title={Einige im {K}leinen {\"u}berall l{\"o}sbare, im {G}rossen
				unl{\"o}sbare diophantische {G}leichungen},
			language={German},
			date={1942},
			journal={J. reine angew. Math.},
			volume={184},
			pages={12\ndash 18},
		}
		
		\bib{Se51}{article}{
			author={Selmer, E.S.},
			title={The {D}iophantine equation $ax^3+by^3+cz^3=0$},
			date={1951},
			journal={Acta Math.},
			volume={85},
			pages={203\ndash 362},
		}
		
		\bib{Se73}{book}{
			author={Serre, J.-P.},
			title={A course in arithmetic},
			series={Graduate Texts in Mathematics},
			publisher={Springer-Verlag},
			date={1973},
			volume={7},
		}
		
		\bib{Si09}{book}{
			author={Silverman, J.},
			title={The arithmetic of elliptic curves},
			series={Graduate Texts in Mathematics},
			publisher={Springer-Verlag},
			date={2009},
			volume={106},
		}
		
	\end{biblist}
\end{bibdiv}

\end{document}